\newtheorem{defn}{Definition}
\newtheorem{thm}{Theorem}
\newtheorem{cor}{Corollary}
\newtheorem{prop}{Proposition}
\newtheorem{lem}{Lemma}
\newtheorem{prob}{Problem}
\providecommand{\keywords}[1]{\text{\textbf{Keywords:}} #1}
\title{Convergence properties of spline-like cardinal interpolation operators acting on $l^p$ data }
\author{Jeff Ledford}
\date{}
\begin{document}
\allowdisplaybreaks

\maketitle

\begin{abstract}
\noindent If $f\in \{f\in L^p(\mathbb{R}): f(x)=\int_{-\pi}^{\pi}e^{ix\xi}d\beta(\xi), \beta\in B.V.([-\pi,\pi])  \}$ , then $f$ is determined by its samples on the integers by taking an appropriate limit.  Specifically, $\| f - L_{\phi_\alpha}f \|_{L^p(\mathbb{R})}\to 0$ as $\alpha\to\infty$ provided that $\{\phi_\alpha: \alpha\in A\}$ is what we call a spline-like family of cardinal interpolators.\\
\keywords{ \it Cardinal Interpolation, Spline, Multiquadric, Gaussian,\\ Poisson kernel}
\end{abstract}

\section{Introduction}

This paper continues the study of convergence properties of cardinal interpolation operators, which was popularized by the spline methods developed by Schoenberg and others.  Ultimately, our goal is to prove a result that is similar to the sampling theorem.  This classical result shows that a Paley-Wiener function is determined by its values at the integers.  Cardinal splines, so called because they interpolate data on the integers, may be used to interpolate $l^2$ data, then by letting the degree increase to infinity, the corresponding Paley-Wiener function is recovered pointwise.  A similar result for a general class of interpolants was proved by the author in \cite{z paper}.  In \cite{Marsden}, Marsden, Richards, and Riemenschneider showed that splines may also be used to recover functions from the space
\begin{equation}\label{A}
\mathcal{A}=\left\{f\in\ L^P(\mathbb{R}): f(x)=(2\pi)^{-1/2}\int_{-\pi}^{\pi} e^{ix\xi} d\beta(\xi), \beta\in B.V.([-\pi,\pi])   \right\}.
\end{equation}
We wish to extend this result to a general class of interpolants, those that are built from what we call spline-like cardinal interpolators.

This remainder of this paper is organized into three sections, the first dealing with definitions and preliminary facts concerning spline-like cardinal interpolators, the second proving the convergence result, and finally in the third section we show that this class of interpolants includes odd-degree cardinal splines, the Gaussian which was the subject of \cite{RS}, Poisson kernels, and two families of multiquadrics.  Of these examples, the convergence property for the Poisson kernels appears to be new, as does the result concerning the family of multiquadrics $\{(x^2+c^2)^{k-1/2}:k\in\mathbb{N}\}$.


\section{Definitions and Basic Facts}

We begin with a convention for the Fourier transform.
\begin{defn}
The \emph{Fourier transform} of a function $f(x)\in L^1(\mathbb{R})$, is defined to be the function
\[
\hat{f}(\xi)=(2\pi)^{-1/2}\int_{\mathbb{R}}f(x)e^{-ix\xi}dx.
\]
\end{defn}
We make the usual extension to the class of tempered distributions using this convention.
Our work will also make use of the mixed Hilbert transform which is described below, more details may be found in \cite{Marsden}.
For $x\in\mathbb{R}$, let $m_x$ be the integer such that $m_x-1/2 \leq x <m_x+1/2$.  If $f=\{f(k)\}\in l^p$ and $1<p<\infty$, we define the \emph{mixed Hilbert transform}, denoted $\mathscr{H}[f](x)$, to be
\[
\mathscr{H}[f](x)=\sum_{k\neq m_x} \dfrac{f(k)}{x-k}.
\]
Proposition 1.3 in \cite{Marsden} provides a constant depending only on $p$ such that
\begin{equation}\label{mH bnd}
\| \mathscr{H}[f]\|_{L^p(\mathbb{R})}\leq C_p \| f \|_{l^p}
\end{equation}

We need a definition similar to the one found in \cite{z paper}. For our purposes, we make the following definition.
\begin{defn}  We say that a function $\phi(x)$ satisfies the \emph{interpolating conditions} if it satisfies the all of the following:
\begin{enumerate}
\item[(A1)]$\phi(x)$ is a real valued slowly increasing function on $\mathbb{R}$,
\item[(A2)] $\hat{{\phi}}(\xi) \geq 0$ and $\hat{{\phi}}(\xi)\geq m >0$ in $[-\pi,\pi]$,
\item[(A3)] $\hat{{\phi}}(\xi)\in C^{1}(\mathbb{R}\setminus\{0\})$, and
\item[(A4)] there exists $\epsilon >0$ such that for $j=0,1$ we have\\ $\hat\phi^{(j)}(\xi)=O(| \xi |^{-(1+\epsilon)})$ as $|\xi|\to\infty$.
\end{enumerate}
\end{defn}
These conditions, assure us that the fundamental function $L_{\phi}(x)$ defined by its Fourier transform via the formula
\begin{equation}\label{L}
 \hat{L}_{\phi}(\xi)=(2\pi)^{-1/2} \dfrac{\hat{\phi}(\xi)}{\displaystyle\sum_{j\in\mathbb{Z}} \hat{\phi}(\xi+2\pi j)  },
\end{equation}
is continuous and solves the following interpolation problem.
\begin{prob}
Find a function  $L\in L^2(\mathbb{R})$ that satisfies $L(j)=\delta_{0,j}$ for $j\in\mathbb{Z}$.
\end{prob}
Functions defined by \eqref{L} are often called \emph{fundamental functions}.  By \emph{interpolator}, we mean the function $\phi$ from which a fundamental function is built.  A fundamental function allows us to solve an $l^2$ interpolation problem using interpolants of the form
\begin{equation}\label{I}
\mathscr{I}_{\phi}[f](x)=\sum_{j\in\mathbb{Z}}f(j)L_{\phi}(x-j),
\end{equation}
where $L_\phi(x)$ is defined in \eqref{L} and $\{f(j)\}\in l^2$.  Henceforth, we focus on interpolants of this form. 
Let us define an auxiliary function that will prove useful.  Given a function $f$ which satisfies (A1)-(A4) and $k\in\mathbb{Z}$, we define
\begin{equation}\label{aux}
\mathscr{M}[f]_{k}(u)=\dfrac{f(u+2\pi k)}{f(u)},\quad |u|\leq \pi.
\end{equation}

Since we are interested in matters of convergence, we need to introduce a parameter.  To this end we let $A\subset (0,\infty)$ be an unbounded index set which could be discrete or continuous depending on the example.  We wish to make sure that our parameter interacts well with the limit.  The following regularity definition makes use of the auxiliary function introduced in \eqref{aux}.
\begin{defn}
A collection of functions $\{\phi_\alpha : \alpha\in A\}$ will be called a \emph{spline-like family of cardinal interpolators} if the following conditions are satisfied:
\begin{enumerate}
\item[(B1)] for all $\alpha\in A$, $\phi_\alpha$ satisfies the interpolating conditions,
\item[(B2)] $\displaystyle \sum_{j\in\mathbb{Z}}\sum_{k\neq j} \| \mathscr{M}[\hat{\phi}_\alpha]_{j}(\xi) \mathscr{M}[\hat{\phi}'_\alpha]_{k}(\xi)  \|_{L^1([-\pi,\pi])} \leq C$, independent of $\alpha$,
\item[(B3)] for $j\neq 0$, $\displaystyle\lim_{\alpha\to\infty}\mathscr{M}[\hat{\phi}_\alpha]_{j}(\xi)=0$ for almost every $|\xi|\leq \pi$, and
\item[(B4)] there exists $\{\mathscr{M}_j\}\in l^1$ such that $\mathscr{M}[\hat{\phi}_\alpha]_{j}(\xi)\leq \mathscr{M}_j$ for $j\neq 0, \alpha\in A$.
\end{enumerate}
\end{defn}

This definition seems quite restrictive, nevertheless many popular choices of interpolator satisfy these conditions.  The reader interested in examples may skip ahead to the final section.

We turn now to preliminary results which will aid in the proof of the main theorem.  Most of these are straightforward consequences of the conditions listed above.  For the remainder of the paper, we fix a particular spline-like family of cardinal interpolators $\{\phi_\alpha: \alpha\in A\}$, and note that the particular value of the constant $C$ depends on its occurrence and may change from one line to the next.

\begin{lem}
If $f=\{f(k)\}\in l^2$, $ L_{\phi_\alpha}(x)  $ is defined by \eqref{L}, and $\mathscr{I}_{\phi_\alpha}[f](x)$ is defined by \eqref{I}, then for each $\alpha\in A$, $L_{\phi_\alpha}\in L^2(\mathbb{R})\cap C(\mathbb{R}) $ and $\mathscr{I}_{\phi_\alpha}[f] \in L^2(\mathbb{R} )\cap C(\mathbb{R})$.
\end{lem}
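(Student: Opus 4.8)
The plan is to work entirely on the Fourier side, exploiting the structure of \eqref{L}. Write $D_\alpha(\xi)=\sum_{j\in\mathbb{Z}}\hat\phi_\alpha(\xi+2\pi j)$ for the denominator in \eqref{L}; the decay in (A4) makes this series converge for a.e.\ $\xi$ and defines a nonnegative $2\pi$-periodic function. Because every summand is nonnegative by (A2), one has the two pointwise bounds $D_\alpha(\xi)\ge\hat\phi_\alpha(\xi)$ (keep only the $j=0$ term) and, using periodicity together with $\hat\phi_\alpha\ge m$ on $[-\pi,\pi]$, $D_\alpha(\xi)\ge m>0$ for a.e.\ $\xi$. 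The first bound gives $0\le\hat L_{\phi_\alpha}(\xi)\le(2\pi)^{-1/2}$ (so a possible singularity of $\hat\phi_\alpha$ at the origin affects only the null set $\{0\}$ and is harmless), while the second together with the decay $\hat\phi_\alpha(\xi)=O(|\xi|^{-(1+\epsilon)})$ from (A4) yields $\hat L_{\phi_\alpha}(\xi)\le(2\pi)^{-1/2}m^{-1}\hat\phi_\alpha(\xi)=O(|\xi|^{-(1+\epsilon)})$. Hence $\hat L_{\phi_\alpha}$ is bounded and decays faster than $|\xi|^{-1}$, so $\hat L_{\phi_\alpha}\in L^1(\mathbb{R})\cap L^2(\mathbb{R})$. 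Plancherel then gives $L_{\phi_\alpha}\in L^2(\mathbb{R})$, and since the inverse transform of an $L^1$ function is continuous, $L_{\phi_\alpha}$ has a continuous representative; this settles the first assertion.

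The second assertion rests on one identity. Since $D_\alpha$ is $2\pi$-periodic, summing \eqref{L} over the lattice collapses to
\[
\sum_{n\in\mathbb{Z}}\hat L_{\phi_\alpha}(\xi+2\pi n)=(2\pi)^{-1/2}\frac{1}{D_\alpha(\xi)}\sum_{n\in\mathbb{Z}}\hat\phi_\alpha(\xi+2\pi n)=(2\pi)^{-1/2}.
\]
In particular, because $0\le\hat L_{\phi_\alpha}\le(2\pi)^{-1/2}\le 1$, we get the companion bound $\sum_{n}|\hat L_{\phi_\alpha}(\xi+2\pi n)|^2\le\sum_n\hat L_{\phi_\alpha}(\xi+2\pi n)=(2\pi)^{-1/2}$. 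These are the two estimates that do all the work.

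Now fix $f=\{f(k)\}\in l^2$ and let $\tau_f(\xi)=\sum_j f(j)e^{-ij\xi}$ be its Fourier series, an element of $L^2([-\pi,\pi])$ with $\|\tau_f\|_{L^2([-\pi,\pi])}=\sqrt{2\pi}\,\|f\|_{l^2}$. The $N$-th partial sum of \eqref{I} has Fourier transform $\tau_f^N(\xi)\hat L_{\phi_\alpha}(\xi)$, where $\tau_f^N$ is the truncated series. Periodizing and applying the square-sum bound gives $\|(\tau_f^N-\tau_f^M)\hat L_{\phi_\alpha}\|_{L^2(\mathbb{R})}^2\le(2\pi)^{-1/2}\|\tau_f^N-\tau_f^M\|_{L^2([-\pi,\pi])}^2$, so the partial sums are Cauchy and \eqref{I} converges in $L^2(\mathbb{R})$ to a function whose transform is $\tau_f\hat L_{\phi_\alpha}$; this yields $\mathscr I_{\phi_\alpha}[f]\in L^2(\mathbb{R})$. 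For continuity I would show that this transform is integrable: periodizing once more and invoking the identity,
\[
\int_{\mathbb{R}}|\tau_f(\xi)|\,|\hat L_{\phi_\alpha}(\xi)|\,d\xi=(2\pi)^{-1/2}\int_{-\pi}^{\pi}|\tau_f(\xi)|\,d\xi\le\|\tau_f\|_{L^2([-\pi,\pi])}<\infty,
\]
so $\tau_f\hat L_{\phi_\alpha}\in L^1(\mathbb{R})$ and its inverse transform, namely $\mathscr I_{\phi_\alpha}[f]$, has a continuous representative.

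The individual computations are routine once the periodicity identity for $D_\alpha$ is in hand; the one point needing care is the interchange of summation and the identification of the Fourier transform of the infinite series \eqref{I}, which is exactly where the two periodization estimates (the square-sum bound for $L^2$ convergence and the identity $\sum_n\hat L_{\phi_\alpha}(\xi+2\pi n)=(2\pi)^{-1/2}$ for the $L^1$ estimate) are essential. Packaging continuity through the $L^1$-membership of the transform is what lets me avoid any pointwise decay estimate on $L_{\phi_\alpha}$ itself, which would otherwise be the main obstacle.
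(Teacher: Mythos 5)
Your proof is correct and takes essentially the same route as the paper: both arguments work entirely on the Fourier side, using the periodization identity $\sum_n \hat L_{\phi_\alpha}(\xi+2\pi n)=(2\pi)^{-1/2}$ together with the bound $0\le\hat L_{\phi_\alpha}\le(2\pi)^{-1/2}$ to place $\hat L_{\phi_\alpha}$ and $\widehat{\mathscr{I}_{\phi_\alpha}[f]}=\tau_f\hat L_{\phi_\alpha}$ in $L^1\cap L^2$, then invoke Plancherel and the fact that inverse transforms of $L^1$ functions are continuous. The only minor difference is that you derive $\hat L_{\phi_\alpha}\in L^1(\mathbb{R})$ from the lower bound in (A2) combined with the decay in (A4), whereas the paper reads it off directly from the periodization identity; otherwise your write-up simply makes explicit the Cauchy-sequence and $L^1$ steps that the paper leaves as ``similar.''
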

\begin{proof}
To see that $L_{\phi_\alpha} \in L^2(\mathbb{R})\cap C(\mathbb{R} ) $, we need only consider the Fourier transform.
\begin{align*}
\int_{\mathbb{R}}|\hat{L}_{\phi_\alpha}(\xi)| d\xi & = \sum_{j\in\mathbb{Z}}\int_{-\pi}^{\pi}|\hat{L}_{\phi_\alpha}(\xi-2\pi j)| d\xi\\
& =\int_{-\pi}^{\pi}\sum_{j\in\mathbb{Z}}|\hat{L}_{\phi_\alpha}(\xi-2\pi j)| d\xi =(2\pi)^{1/2}
\end{align*}
Thus $L_{\phi_\alpha} \in C(\mathbb{R})$. Since $\phi_\alpha$ satisfies (A2), the calculation for $L_{\phi_\alpha} \in L^2(\mathbb{R})$ is similar.
To see that $\mathscr{I}_{\phi_\alpha}[f] \in L^2(\mathbb{R})$, we again use the Fourier transform.
\begin{align*}
&\int_{\mathbb{R}}|\widehat{\mathscr{I}_{\phi_\alpha}[f]}(\xi)|d\xi = \int_{\mathbb{R}} \left|\hat{L}_{\phi_\alpha}(\xi) \sum_{k\in\mathbb{Z}}f(k)e^{ik\xi} \right|^2 d\xi \\
= & \sum_{j\in\mathbb{Z}}\int_{-\pi}^{\pi} \left| \hat{L}_{\phi_\alpha}(\xi-2\pi j) \sum_{k\in\mathbb{Z}}f(k)e^{ik\xi}   \right|^2d\xi\\
\leq &\int_{-\pi}^{\pi} \left|  \sum_{k\in\mathbb{Z}}f(k)e^{ik\xi}     \right|^2d\xi = \|\{f(k)\} \|_{l^2}
\end{align*}
The last equality coming from Parseval's formula.  A similar argument shows that $\widehat{\mathscr{I}_{\phi_\alpha}[f]}\in L^1(\mathbb{R})$, hence $\mathscr{I}_{\phi_\alpha}[f]\in C(\mathbb{R})$.
\end{proof}

\begin{lem}
For all $\alpha\in A$, $L_{\phi_\alpha}(x)$ satisfies $L_{\phi_\alpha}(j)=\delta_{0,j}$ for $j\in\mathbb{Z}$.
\end{lem}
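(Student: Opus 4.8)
The plan is to recover $L_{\phi_\alpha}$ from its Fourier transform by inversion and then evaluate the resulting integral at the integers. The previous lemma already established that $\hat{L}_{\phi_\alpha}\in L^1(\mathbb{R})$ with $\int_{\mathbb{R}}|\hat{L}_{\phi_\alpha}(\xi)|\,d\xi=(2\pi)^{1/2}$, so $L_{\phi_\alpha}$ is continuous and agrees pointwise with the inversion integral
\[
L_{\phi_\alpha}(x)=(2\pi)^{-1/2}\int_{\mathbb{R}}\hat{L}_{\phi_\alpha}(\xi)e^{ix\xi}\,d\xi.
\]
First I would set $x=j$ for $j\in\mathbb{Z}$ and partition $\mathbb{R}$ into the translated periods $[-\pi+2\pi n,\pi+2\pi n]$, $n\in\mathbb{Z}$. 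The absolute integrability from the previous lemma justifies interchanging the resulting sum with the integral via Fubini's theorem, and since $e^{2\pi ijn}=1$ for integers $j,n$, the exponential factors out to give
\[
L_{\phi_\alpha}(j)=(2\pi)^{-1/2}\int_{-\pi}^{\pi}e^{ij\xi}\sum_{n\in\mathbb{Z}}\hat{L}_{\phi_\alpha}(\xi+2\pi n)\,d\xi.
\]

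The crux is then to evaluate the periodization $\sum_{n\in\mathbb{Z}}\hat{L}_{\phi_\alpha}(\xi+2\pi n)$. Substituting the defining formula \eqref{L}, the denominator $\sum_{m\in\mathbb{Z}}\hat{\phi}_\alpha(\xi+2\pi m)$ appearing in each term is $2\pi$-periodic, since the reindexing $m\mapsto m+n$ leaves it unchanged; hence it is independent of $n$ and may be pulled outside the sum over $n$. The remaining numerators then sum to exactly this same denominator, so the periodization collapses to the constant $(2\pi)^{-1/2}$. This is the key structural fact, and is precisely the reason $\hat{\phi}_\alpha$ is divided by its periodization in \eqref{L}; condition (A2) guarantees the denominator is bounded below by $m>0$ on $[-\pi,\pi]$, so no term is ill-defined.

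Finally I would substitute this constant back in to obtain
\[
L_{\phi_\alpha}(j)=(2\pi)^{-1}\int_{-\pi}^{\pi}e^{ij\xi}\,d\xi=\delta_{0,j},
\]
using the elementary orthogonality of the exponentials on $[-\pi,\pi]$. The only delicate point I anticipate is the justification of the Fubini interchange and the pointwise validity of Fourier inversion, but both follow immediately from the $L^1$ bound established in the previous lemma. Everything after that reduction is the short algebraic simplification of the periodized kernel.
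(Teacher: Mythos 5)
Your proposal is correct and follows essentially the same route as the paper's proof: Fourier inversion (justified by the $L^1$ bound from the preceding lemma), partition of $\mathbb{R}$ into the intervals $[-\pi+2\pi n,\pi+2\pi n]$, collapse of the periodization $\sum_{n}\hat{L}_{\phi_\alpha}(\xi+2\pi n)$ to the constant $(2\pi)^{-1/2}$, and then orthogonality of the exponentials on $[-\pi,\pi]$. In fact you spell out the key cancellation (the $2\pi$-periodic denominator factoring out so the numerators sum to it) more explicitly than the paper does.
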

\begin{proof}
Using the inversion formula for the Fourier transform, we get for $j\in\mathbb{Z}:$
\begin{align*}
L_{\phi_\alpha}(j) &= (2\pi)^{-1/2}\int_{\mathbb{R}}\hat{L}_{\phi_\alpha}(\xi)e^{ij\xi}d\xi \\
&=(2\pi)^{-1}\sum_{j\in\mathbb{Z}}\int_{-\pi}^{\pi} \hat{L}_{\phi_\alpha}(\xi-2\pi j)e^{ij\xi}d\xi \\
&=(2\pi)^{-1}\int_{-\pi}^{\pi}e^{ij\xi}d\xi = \delta_{0,j}.
\end{align*}
\end{proof}
We introduce an operator which plays an important role throughout the remainder of the paper, the Whittaker map $W:l^p\to L^p(\mathbb{R})$, given by
\[
\mathscr{W}[\{c_k\}](x)=\sum_{k\in\mathbb{Z}}c_k\dfrac{\sin(\pi(x-k))}{\pi(x-k)}.
\]
We mention a few results concerning $\mathscr{W}$. In \cite{Marsden}, it is shown (in Lemma 1.4), that this a continuous mapping for $1<p<\infty$, i.e.
\begin{equation}
\|\mathscr{W}[\{c_k\}] (x) \|_{L^p(\mathbb{R})}\leq C_p\|  \{c_k\}\|_{l^p},
\end{equation}
where $C_p$ depends only on $p$.
Furthermore, it is shown (in Theorem 3.4) that the space 
\[
\mathcal{B}=\left\{ f: f(x)=\mathscr{W}[\{c_k\}](x), \{c_k\}\in l^p \right\}
\]
is equivalent to the space $\mathcal{A}$ defined in \eqref{A}.


\section{Main Result}

Our goal is to prove a generalization of Theorem 3.4 in \cite{Marsden}, which gives another equivalent formulation of the space $\mathcal{A}$. Our argument closely resembles the one given there.  We begin by finding the $L^p$-norm of $L_{\phi_\alpha}$.

\begin{lem}
If $\{\phi_\alpha:\alpha\in A\}$ is a spline-like family of cardinal interpolants, then there exists a constant $C$ independent of $\alpha$ such that
\begin{equation}\label{deriv bnd}
\| D\hat{L}_{\phi_\alpha} \|_{L^1(\mathbb{R})} \leq C
\end{equation}
\end{lem}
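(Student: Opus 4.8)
The plan is to compute $D\hat{L}_{\phi_\alpha}$ explicitly from \eqref{L} and reduce the integral over $\mathbb{R}$ to an integral over the single period $[-\pi,\pi]$, where (A2) and (B2) can be brought to bear. Write $\sigma_\alpha(\xi)=\sum_{j\in\mathbb{Z}}\hat{\phi}_\alpha(\xi+2\pi j)$ for the denominator in \eqref{L}, so that $\hat{L}_{\phi_\alpha}(\xi)=(2\pi)^{-1/2}\hat{\phi}_\alpha(\xi)/\sigma_\alpha(\xi)$. Conditions (A3) and (A4) guarantee that $\sigma_\alpha$ and its term-by-term derivative $\sigma_\alpha'(\xi)=\sum_{j\in\mathbb{Z}}\hat{\phi}_\alpha'(\xi+2\pi j)$ converge absolutely and locally uniformly off $2\pi\mathbb{Z}$, since $\hat{\phi}_\alpha^{(j)}(\xi)=O(|\xi|^{-(1+\epsilon)})$; in particular $\sigma_\alpha$ is $2\pi$-periodic and $C^1$ almost everywhere, and so is $\hat{L}_{\phi_\alpha}$. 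The quotient rule then gives $D\hat{L}_{\phi_\alpha}(\xi)=(2\pi)^{-1/2}[\hat{\phi}_\alpha'(\xi)\sigma_\alpha(\xi)-\hat{\phi}_\alpha(\xi)\sigma_\alpha'(\xi)]/\sigma_\alpha(\xi)^2$. Mirroring the periodization in the proof of the first lemma and using the periodicity of $\sigma_\alpha,\sigma_\alpha'$, I would evaluate at $\xi+2\pi j$ for $|\xi|\le\pi$ to get
\[
|D\hat{L}_{\phi_\alpha}(\xi+2\pi j)|=(2\pi)^{-1/2}\frac{|\hat{\phi}_\alpha'(\xi+2\pi j)\sigma_\alpha(\xi)-\hat{\phi}_\alpha(\xi+2\pi j)\sigma_\alpha'(\xi)|}{\sigma_\alpha(\xi)^2},
\]
and then write $\|D\hat{L}_{\phi_\alpha}\|_{L^1(\mathbb{R})}=\sum_{j\in\mathbb{Z}}\int_{-\pi}^{\pi}|D\hat{L}_{\phi_\alpha}(\xi+2\pi j)|\,d\xi$.

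The algebraic heart is the next step. Substituting the series for $\sigma_\alpha$ and $\sigma_\alpha'$ into the numerator, the two ``diagonal'' contributions, namely the $i=j$ term of $\hat{\phi}_\alpha'(\xi+2\pi j)\sum_i\hat{\phi}_\alpha(\xi+2\pi i)$ and the $k=j$ term of $\hat{\phi}_\alpha(\xi+2\pi j)\sum_k\hat{\phi}_\alpha'(\xi+2\pi k)$, are equal and cancel, leaving only off-diagonal products $\hat{\phi}_\alpha'(\xi+2\pi j)\hat{\phi}_\alpha(\xi+2\pi i)$ with $i\neq j$ and $\hat{\phi}_\alpha(\xi+2\pi j)\hat{\phi}_\alpha'(\xi+2\pi k)$ with $k\neq j$. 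Taking absolute values, applying Tonelli to interchange $\sum_j$ with $\int_{-\pi}^{\pi}$ (legitimate now that every term is nonnegative), and invoking (A2) in the form $\sigma_\alpha(\xi)\ge\hat{\phi}_\alpha(\xi)\ge m>0$ on $[-\pi,\pi]$ to replace $\sigma_\alpha(\xi)^2$ by the smaller $\hat{\phi}_\alpha(\xi)^2$ in each denominator, every surviving term becomes a product of the ratios \eqref{aux}; for instance $\hat{\phi}_\alpha(\xi+2\pi j)\hat{\phi}_\alpha'(\xi+2\pi k)/\hat{\phi}_\alpha(\xi)^2=\mathscr{M}[\hat{\phi}_\alpha]_j(\xi)\,\mathscr{M}[\hat{\phi}_\alpha']_k(\xi)$. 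Summing over $j$ and the off-diagonal index, the whole quantity is bounded by a constant multiple of the double sum in (B2), which is $\le C$ uniformly in $\alpha$, establishing \eqref{deriv bnd}.

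I expect the cancellation of the diagonal terms to be the crucial step: it is precisely this cancellation that discards the $j=k$ contributions, for which $\hat{\phi}_\alpha(\xi+2\pi j)/\hat{\phi}_\alpha(\xi)$ carries no decay, and leaves exactly the off-diagonal products that (B2) is built to control. Two technical points accompany it. First, one must justify the term-by-term differentiation of $\sigma_\alpha$ and the Tonelli interchange; both follow from the decay in (A4), which moreover renders the possible singularity of $\hat{\phi}_\alpha'$ at the origin a null-set matter, harmless for an $L^1$ bound. Second, one should confirm that (B2) is the \emph{only} $\alpha$-dependent ingredient: the lower bound $\sigma_\alpha\ge\hat{\phi}_\alpha$ and the passage to the $\mathscr{M}$-ratios are structural and hold for every $\alpha$, so the constant produced is genuinely independent of $\alpha$, as required.
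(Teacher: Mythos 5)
Your proposal is correct and takes essentially the same route as the paper's own proof: quotient rule for $D\hat{L}_{\phi_\alpha}$, periodization of the $L^1$ norm, cancellation of the diagonal terms, the (A2) lower bound $\sigma_\alpha(\xi)\geq\hat\phi_\alpha(\xi)$ on the denominator, and then Tonelli together with (B2) applied to the resulting off-diagonal $\mathscr{M}$-products. The differences are purely cosmetic (notation and the order in which absolute values and the interchange of sum and integral are invoked).
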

\begin{proof}
The quotient rule gives us
\[
D\hat{L}_{\phi_\alpha}(\xi) =(2\pi)^{-1/2} \dfrac{\hat\phi'_\alpha(\xi)P_\alpha(\xi) - \hat\phi_\alpha(\xi)P'(\xi) }   { P^2_\alpha(\xi) },
\]
where $P_\alpha(\xi)=\sum_{j\in\mathbb{Z}}\hat\phi_\alpha(\xi-2\pi j) $.
We have 
\begin{align*}
&(2\pi)^{1/2}\int_{\mathbb{R}}\left|   \dfrac{\hat\phi'_\alpha(\xi)P_\alpha(\xi) - \hat\phi_\alpha(\xi)P'(\xi) }   { P^2_\alpha(\xi) }    \right|d\xi \\
=& \sum_{m\in\mathbb{Z}}\int_{(2m-1)\pi}^{(2m+1)\pi}\left|  \dfrac{\hat\phi'_\alpha(\xi)P_\alpha(\xi) - \hat\phi_\alpha(\xi)P'(\xi) }   { P^2_\alpha(\xi) }    \right|d\xi \\
=& \sum_{m\in\mathbb{Z}}\int_{-\pi}^{\pi} \left|   \dfrac{\hat\phi'_\alpha(\xi+2\pi m)P_\alpha(\xi) - \hat\phi_\alpha(\xi+2\pi m)P'(\xi) }   { P^2_\alpha(\xi) }   \right|d\xi   .
\end{align*}
The integrand may be simplified by letting $u_\alpha(\xi)=\sum_{j\neq 0} \hat{\phi}_{\alpha}(\xi-2\pi j)$ for $|\xi|\leq\pi$.  Then (A3) and (A4) imply that we can differentiate term by term, so we get
\begin{align*}
& \sum_{m\in\mathbb{Z}}\int_{-\pi}^{\pi} \left|   \dfrac{\hat\phi'_\alpha(\xi+2\pi m)P_\alpha(\xi) - \hat\phi_\alpha(\xi+2\pi m)P'(\xi) }   { P^2_\alpha(\xi) }   \right|d\xi \\
=& \sum_{m\in\mathbb{Z}}\int_{-\pi}^{\pi} \left| \dfrac{\hat\phi'_\alpha(\xi+2\pi m)u_\alpha(\xi) - \hat\phi_\alpha(\xi+2\pi m)u_\alpha'(\xi) }   { P^2_\alpha(\xi) }     \right|d\xi\\ \leq & \int_{-\pi}^{\pi} \sum_{m\in\mathbb{Z}}\left| \dfrac{\hat\phi'_\alpha(\xi+2\pi m)u_\alpha(\xi+2\pi m) - \hat\phi_\alpha(\xi+2\pi m)u_\alpha'(\xi+2\pi m) }   { \hat\phi_\alpha^2(\xi) }     \right|d\xi \\
\leq & 2 \int_{-\pi}^{\pi} [\hat\phi_\alpha(\xi)]^{-2}\sum_{m\in\mathbb{Z}}| \hat{\phi}_\alpha(\xi+2\pi m) \sum_{j\neq 0}\hat\phi'_{\alpha}(\xi+2\pi(m-j))     |d\xi \\
\leq & 2\int_{-\pi}^{\pi} \sum_{m\in\mathbb{Z}} \sum_{n\neq m} | \mathscr{M}[\hat{\phi}_\alpha]_{m}(\xi) \mathscr{M}[\hat{\phi}'_\alpha]_{n}(\xi)      |    d\xi \\
=& 2\sum_{m\in\mathbb{Z}}\sum_{n\neq m}\int_{-\pi}^{\pi}  | \mathscr{M}[\hat{\phi}_\alpha]_{m}(\xi) \mathscr{M}[\hat{\phi}'_\alpha]_{n}(\xi) |          d\xi  \leq C
\end{align*}
The constant is independent of $\alpha$ from (B2).  The interchange of the order of the sums and the integral is justified by Tonelli's theorem.
\end{proof}
 We have the following two straightforward corollaries.
 
\begin{cor}
If the conditions of the above lemma are met, then we have the pointwise bound
\begin{equation}\label{pt bnd}
|L_{\phi_\alpha}(x)|\leq C(1+|x|)^{-1},
\end{equation}
where C is independent of $\alpha$.
\end{cor}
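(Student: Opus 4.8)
The plan is to combine two uniform bounds on $L_{\phi_\alpha}$: a crude $L^\infty$ bound that is good for small $x$, and a decay estimate obtained by integration by parts that is good for large $x$. Together these interpolate to the claimed $(1+|x|)^{-1}$ bound with a single $\alpha$-independent constant.

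First I would record the trivial bound. The computation in the first lemma shows that $\sum_{j\in\mathbb{Z}}\hat{L}_{\phi_\alpha}(\xi-2\pi j)=(2\pi)^{-1/2}$ and that each summand is nonnegative by (A2), whence $\|\hat{L}_{\phi_\alpha}\|_{L^1(\mathbb{R})}=(2\pi)^{1/2}$ independently of $\alpha$. Applying Fourier inversion and passing the absolute value inside the integral then yields $|L_{\phi_\alpha}(x)|\leq (2\pi)^{-1/2}\|\hat{L}_{\phi_\alpha}\|_{L^1}=1$ for every $x$ and every $\alpha$. This already settles the claim on any bounded set; in particular, for $|x|\leq 1$ we have $1\leq 2(1+|x|)^{-1}$.

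Next, for $x\neq 0$ I would extract the decay from the previous lemma by integrating by parts in the inversion formula $L_{\phi_\alpha}(x)=(2\pi)^{-1/2}\int_{\mathbb{R}}\hat{L}_{\phi_\alpha}(\xi)e^{ix\xi}\,d\xi$. Writing $e^{ix\xi}=(ix)^{-1}\frac{d}{d\xi}e^{ix\xi}$ and integrating by parts on $(-\infty,0)$ and $(0,\infty)$ separately, the boundary contributions at $\pm\infty$ vanish because $\hat{L}_{\phi_\alpha}(\xi)\leq (2\pi)^{-1/2}m^{-1}\hat{\phi}_\alpha(\xi)\to 0$ by (A2) and (A4), while the two one-sided contributions at the origin cancel since $\hat{L}_{\phi_\alpha}$ is continuous there. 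What remains is $L_{\phi_\alpha}(x)=-(2\pi)^{-1/2}(ix)^{-1}\int_{\mathbb{R}}D\hat{L}_{\phi_\alpha}(\xi)e^{ix\xi}\,d\xi$, so that $|L_{\phi_\alpha}(x)|\leq (2\pi)^{-1/2}|x|^{-1}\|D\hat{L}_{\phi_\alpha}\|_{L^1}\leq C|x|^{-1}$ with $C$ independent of $\alpha$ by the lemma. For $|x|\geq 1$ this gives $|L_{\phi_\alpha}(x)|\leq 2C(1+|x|)^{-1}$, and combining with the previous paragraph completes the proof after enlarging $C$.

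The main obstacle I anticipate is justifying the integration by parts in the presence of the singularity of $\hat{\phi}'_\alpha$ at the origin permitted by (A3): one must verify that $\hat{L}_{\phi_\alpha}$ is genuinely continuous at $0$ (which follows from the continuity of $\hat{\phi}_\alpha$ together with $P_\alpha\geq m>0$), so that the one-sided boundary terms at the origin cancel, and that $D\hat{L}_{\phi_\alpha}$, though possibly singular at $0$, is integrable — but this last point is exactly the content of the previous lemma. Everything else is routine, and crucially every constant that appears traces back to the $\alpha$-independent estimate $\|D\hat{L}_{\phi_\alpha}\|_{L^1}\leq C$, which is precisely what makes the final constant independent of $\alpha$.
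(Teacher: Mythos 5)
Your proposal is correct and follows essentially the same route as the paper: the paper's one-line proof (``take the Fourier transform of $(1+x)L_{\phi_\alpha}(x)$'') amounts precisely to your two ingredients, namely the $\alpha$-independent bound $\|\hat{L}_{\phi_\alpha}\|_{L^1(\mathbb{R})}=(2\pi)^{1/2}$ from the first lemma (your sup bound) and the bound \eqref{deriv bnd} on $\|D\hat{L}_{\phi_\alpha}\|_{L^1(\mathbb{R})}$ (your decay bound, since your integration by parts is exactly the computation of the transform of $xL_{\phi_\alpha}(x)$). The only difference is bookkeeping --- you split into $|x|\leq 1$ and $|x|\geq 1$ rather than bounding $(1+x)L_{\phi_\alpha}(x)$ in one stroke --- and, if anything, you are more careful than the paper about the boundary terms at the origin.
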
\label{cor 1}
\begin{proof}
This follows from taking the Fourier transform of $(1+x)L_{\phi_\alpha}(x)$.
\end{proof}

\begin{cor}\label{cor 2}
For $1<p <\infty$, $\|L_{\phi_\alpha}\|_{L^p(\mathbb{R})} \leq C_p$, where $C_p$ is independent of $\alpha$.
\end{cor}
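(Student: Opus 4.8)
The plan is to derive this estimate directly from the pointwise decay bound established in the preceding corollary, namely $|L_{\phi_\alpha}(x)| \leq C(1+|x|)^{-1}$ with $C$ independent of $\alpha$. The strategy is simply to raise this bound to the $p$-th power and integrate, thereby reducing the entire problem to the convergence of an elementary one-dimensional integral. No further structural input from the spline-like family is required at this stage, since the relevant uniformity has already been absorbed into the $\alpha$-independence of the constant in the previous corollary.

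Concretely, I would write
\[
\|L_{\phi_\alpha}\|_{L^p(\mathbb{R})}^p = \int_{\mathbb{R}} |L_{\phi_\alpha}(x)|^p\, dx \leq C^p \int_{\mathbb{R}} (1+|x|)^{-p}\, dx.
\]
The remaining integral is independent of $\alpha$ and evaluates to $2/(p-1)$, so that
\[
\|L_{\phi_\alpha}\|_{L^p(\mathbb{R})} \leq C\left(\frac{2}{p-1}\right)^{1/p} =: C_p.
\]
Because $C$ was already independent of $\alpha$, the resulting constant $C_p$ depends only on $p$, exactly as claimed.

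The only point requiring genuine attention — and the reason the hypothesis $1 < p < \infty$ appears — is the convergence of $\int_{\mathbb{R}} (1+|x|)^{-p}\, dx$. This integral converges precisely when $p > 1$, since near infinity the integrand behaves like $|x|^{-p}$, whereas at $p = 1$ it diverges logarithmically. Thus the single decay rate $(1+|x|)^{-1}$ supplied by the previous corollary is exactly strong enough to yield a uniform $L^p$ bound for every $p > 1$ and no weaker value. I expect this to be the entirety of the argument; there is no real obstacle beyond recognizing that the whole content reduces to this convergence condition, which is what dictates the range of $p$.
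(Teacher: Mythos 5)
Your argument is correct and is exactly what the paper has in mind: the paper's proof of this corollary consists of the single line that it ``follows immediately from the above corollary,'' i.e.\ from integrating the uniform pointwise bound $|L_{\phi_\alpha}(x)|\leq C(1+|x|)^{-1}$, which is precisely what you carry out (including the correct evaluation $\int_{\mathbb{R}}(1+|x|)^{-p}\,dx = 2/(p-1)$ and the observation that $p>1$ is exactly what makes this finite). Nothing is missing; you have simply made explicit the step the paper leaves to the reader.
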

\begin{proof}
This follows immediately from the above corollary.
\end{proof}

We may now establish a bound for $\|\mathscr{I}_{\phi_\alpha}   \|_{l^p\to L^p}$.  
\begin{prop}
Let $\{\phi_\alpha:\alpha\in A\}$ be a spline-like family of cardinal interpolators, $f=\{f(k)\}\in l^p$, and $1< p < \infty$, then
\begin{equation}\label{I bnd}
\|\mathscr{I}_{\phi_\alpha}[f]   \|_{L^p(\mathbb{R})} \leq C_p \|  f \|_{l^p},
\end{equation}
where $\mathscr{I}_{\phi_\alpha}$ is defined as in \eqref{I} and $C_p$ is independent of both $f$ and $\alpha$.
\end{prop}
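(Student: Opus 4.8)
The plan is to mimic the proof of the Whittaker map bound, factoring $L_{\phi_\alpha}$ so that its singular part becomes a mixed Hilbert transform controlled by \eqref{mH bnd}. By the pointwise bound \eqref{pt bnd}, the function $b_\alpha(t):=tL_{\phi_\alpha}(t)$ is bounded on $\mathbb{R}$; more precisely $\hat b_\alpha(\xi)=iD\hat L_{\phi_\alpha}(\xi)$, so \eqref{deriv bnd} gives $\|\hat b_\alpha\|_{L^1(\mathbb{R})}\le C$ and hence $\|b_\alpha\|_{L^\infty(\mathbb{R})}\le C$ uniformly in $\alpha$, while $b_\alpha$ is continuous with $b_\alpha(n)=0$ for every $n\in\mathbb{Z}$. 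Fixing $x$ and writing $m_x$ for the nearest integer, I first peel off the diagonal term $f(m_x)L_{\phi_\alpha}(x-m_x)$: since $|L_{\phi_\alpha}(x-m_x)|\le C$ for $x\in[m_x-\tfrac12,m_x+\tfrac12)$, a change of variables and the uniform $L^p$-bound on $L_{\phi_\alpha}$ (Corollary 2) give
\[
\Big\|\,f(m_x)L_{\phi_\alpha}(\,\cdot\,-m_x)\,\Big\|_{L^p(\mathbb{R})}^p=\sum_{m\in\mathbb{Z}}|f(m)|^p\int_{-1/2}^{1/2}|L_{\phi_\alpha}(u)|^p\,du\le C_p^p\|f\|_{l^p}^p.
\]

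For the tail $T(x)=\sum_{j\neq m_x}f(j)\,b_\alpha(x-j)/(x-j)$ I add and subtract $b_\alpha(x-m_x)$, splitting $T$ into a mixed Hilbert transform plus a remainder:
\[
T(x)=b_\alpha(x-m_x)\,\mathscr{H}[f](x)+\sum_{j\neq m_x}f(j)\,\frac{b_\alpha(x-j)-b_\alpha(x-m_x)}{x-j}=:b_\alpha(x-m_x)\,\mathscr{H}[f](x)+R(x).
\]
The first summand is bounded in $L^p$ by $\|b_\alpha\|_\infty$ times \eqref{mH bnd}, so everything reduces to estimating $R$, which I expect to be the main obstacle. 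The pointwise bound is useless here: dominating the kernels by their absolute values reduces matters to convolution of $\{|f(j)|\}$ with the even sequence $\{1/|l|\}_{l\neq0}$, whose periodic symbol has a logarithmic singularity and which is therefore unbounded on $l^p$. Cancellation must be kept.

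To retain it I would pass the numerator through Fourier inversion. Writing $b_\alpha(x-j)-b_\alpha(x-m_x)=(2\pi)^{-1/2}\int_{\mathbb{R}}\hat b_\alpha(\xi)e^{ix\xi}\big(e^{-ij\xi}-e^{-im_x\xi}\big)\,d\xi$ and interchanging the (absolutely convergent, by H\"older with exponent $p'<\infty$) sum and integral by Fubini, one obtains
\[
R(x)=(2\pi)^{-1/2}\int_{\mathbb{R}}\hat b_\alpha(\xi)\,e^{ix\xi}\Big(\mathscr{H}\big[\{f(j)e^{-ij\xi}\}_j\big](x)-e^{-im_x\xi}\,\mathscr{H}[f](x)\Big)\,d\xi.
\]
Now Minkowski's integral inequality moves the $L^p(dx)$-norm inside, and the crucial observation is that modulating the data by $e^{-ij\xi}$ leaves its $l^p$-norm unchanged while $|e^{ix\xi}|=|e^{-im_x\xi}|=1$; hence by \eqref{mH bnd} each inner term is bounded in $L^p(dx)$ by $2C_p\|f\|_{l^p}$, uniformly in $\xi$. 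Integrating against $|\hat b_\alpha(\xi)|$ and invoking $\|\hat b_\alpha\|_{L^1}=\|D\hat L_{\phi_\alpha}\|_{L^1}\le C$ from \eqref{deriv bnd} yields $\|R\|_{L^p}\le C_p\|f\|_{l^p}$ with constant independent of $\alpha$. Collecting the diagonal, Hilbert-transform, and remainder estimates establishes \eqref{I bnd}.
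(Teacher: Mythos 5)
Your proof is correct, and it rests on the same pillars as the paper's: the split at the nearest integer $m_x$, the uniform bound \eqref{deriv bnd} on $\|D\hat{L}_{\phi_\alpha}\|_{L^1(\mathbb{R})}$, and the mixed Hilbert transform bound \eqref{mH bnd}. The execution differs in two ways worth noting. First, the paper argues by duality --- the converse of H\"older's inequality, pairing finitely supported data against compactly supported $g\in L^q(\mathbb{R})$ and letting $N,R\to\infty$ --- whereas you bound $\|\mathscr{I}_{\phi_\alpha}[f]\|_{L^p(\mathbb{R})}$ directly, with Minkowski's integral inequality playing the role of the pairing; both are sound, and yours removes the truncation and limiting step. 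Second, and more substantively, your treatment of the off-diagonal sum is more careful than the paper's own display: after integrating by parts, the paper writes the inner integral as $\int_{\mathbb{R}}e^{ix\xi}D\hat{L}_{\phi_\alpha}(\xi)\,d\xi$, silently dropping the $j$-dependent phase $e^{-ij\xi}$, and then factors this quantity out of the sum over $j$ so as to recognize $\mathscr{H}[f](x)$ --- a step that, as literally written, is not justified, and which (as you observe) cannot be repaired by taking absolute values inside the sum, since convolving $\{|f(j)|\}$ with $\{1/|l|\}_{l\neq 0}$ is not bounded on $l^p$. Your modulation argument --- recognizing $\sum_{j\neq m_x}f(j)e^{-ij\xi}/(x-j)=\mathscr{H}[\{f(j)e^{-ij\xi}\}](x)$, noting that modulation preserves the $l^p$ norm so that \eqref{mH bnd} applies uniformly in $\xi$, and then integrating against $|\hat{b}_\alpha|\in L^1(\mathbb{R})$ --- is exactly the cancellation-preserving repair, and it is what Marsden, Richards, and Riemenschneider do in the Theorem 3.1 proof to which the paper defers. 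One small economy: your add-and-subtract of $b_\alpha(x-m_x)$ is harmless but unnecessary, since the same Minkowski-plus-modulation step applied directly to the full tail $\sum_{j\neq m_x}f(j)\,b_\alpha(x-j)/(x-j)$ yields the bound in one stroke, without introducing the separate term $b_\alpha(x-m_x)\mathscr{H}[f](x)$.
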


\begin{proof}
The method of proof is similar to Theorem 3.1 in \cite{Marsden}, we encourage the reader to consult that proof as well.  We will use the converse of H\"{o}lder's inequality.  To this end, we suppose that $1<p<\infty$ and $f=\{f(k)\}\in l^p$ satisfies $f(k)=0$ for $|k|>N$.  Letting $q$ be the conjugate exponent to $p$, i.e. $1/p+1/q=1$, we suppose that $g\in L^q(\mathbb{R})$ is supported in $[-R,R]$.  Recalling that $m_x$ is the integer which satisfies $m_x-1/2\leq x < m_x+1/2$, we have
\begin{align*}
&\left| \int_{\mathbb{R}} \mathscr{I}_{\phi_\alpha} (x) g(x) dx \right| = \left| \int_{-R}^{R}\sum_{|j|\leq N}f(j)L_{\phi_\alpha}(x-j)g(x) dx \right| \\
\leq & \left| \int_{-R}^{R}|f(m_x)|L_{\phi_\alpha}(x-m_x)g(x)dx\right| + \left| \int_{-R}^{R} \sum_{j\neq m_x}f(j)L_{\phi_\alpha}(x-j)g(x)    dx\right| \\
\leq & C_p\| f \|_{l^p}\|g \|_{L^q(\mathbb{R})} + \left| \int_{-R}^{R} \sum_{j\neq m_x}f(j)L_{\phi_\alpha}(x-j)g(x)    dx\right|
\end{align*} 
Here we have used Corollary \ref{cor 2} along with H\"{o}lder's inequality to estimate the first term, we note that the constant $C_p$ is independent of $\alpha$.  It remains to bound the second term.
\begin{align*}
&\left| \int_{-R}^{R} \sum_{j\neq m_x}f(j)L_{\phi_\alpha}(x-j)g(x)    dx\right| \\
= & (2\pi)^{-1/2}\left| \int_{-R}^{R} \sum_{j\neq m_x}f(j)\int_{\mathbb{R}}\hat{L}_{\phi_\alpha}(\xi)e^{i(x-j)\xi}d\xi g(x)     dx\right| \\
= & (2\pi)^{-1/2}\left| \int_{-R}^{R}\sum_{j\neq m_x}\dfrac{f(j)}{x-j}\int_{\mathbb{R}}e^{ix\xi}D\hat{L}_{\phi_\alpha}(\xi)d\xi   g(x) dx \right|\\
\leq & C \int_{-R}^{R}\left| \mathscr{H}[f](x)g(x)\right| dx \leq C^*_p\| f \|_{l^p}\| g \|_{L^q(\mathbb{R})}
\end{align*}
Here we have used the Fourier representation of $L_{\phi_\alpha}$ and integrated by parts, then used the bound \eqref{deriv bnd} along with H\"{o}lder's inequality and \eqref{mH bnd}.  The constant $C^*_p$ is independent of $\alpha$, so combining the two estimates and taking $N,R\to\infty$ completes the proof. 
\end{proof}

\begin{prop}  Let $\{\phi_\alpha:\alpha\in A\}$ be a spline-like family of cardinal interpolators, $f=\{f(j)\}\in l^p$, and $1< p< \infty$, then we have
\begin{equation}\label{Lp lim}
\lim_{\alpha\to\infty}\| \mathscr{I}_{\phi_\alpha}[f](x)- W[f](x)  \|_{L^{p}(\mathbb{R})}=0.
\end{equation}
\end{prop}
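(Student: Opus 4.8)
The plan is to exploit the two uniform boundedness estimates already in hand. Proposition~\ref{} — concretely the bound \eqref{I bnd}, which gives $\|\mathscr{I}_{\phi_\alpha}[f]\|_{L^p(\mathbb{R})}\le C_p\|f\|_{l^p}$ with $C_p$ independent of $\alpha$ — together with Lemma~1.4 of \cite{Marsden}, which gives the same type of bound for $\mathscr{W}$, shows that the difference operator $\mathscr{I}_{\phi_\alpha}-\mathscr{W}$ is bounded $l^p\to L^p(\mathbb{R})$ uniformly in $\alpha$. A standard density argument then lets me replace an arbitrary $f\in l^p$ by a finitely supported truncation $f_N$: the tail $\mathscr{I}_{\phi_\alpha}[f-f_N]-\mathscr{W}[f-f_N]$ contributes at most a constant multiple of $\|f-f_N\|_{l^p}$, small independently of $\alpha$. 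Thus it suffices to prove \eqref{Lp lim} when $f$ has only finitely many nonzero terms.

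For such an $f$, linearity, the finiteness of the sum, and translation invariance of the $L^p$-norm reduce the problem to the single-kernel statement $\|L_{\phi_\alpha}-\mathrm{sinc}\|_{L^p(\mathbb{R})}\to0$, where $\mathrm{sinc}(x)=\sin(\pi x)/(\pi x)$ is the kernel defining $\mathscr{W}$. I would attack this on the Fourier side. A direct computation gives $\widehat{\mathrm{sinc}}(\xi)=(2\pi)^{-1/2}\mathbf{1}_{[-\pi,\pi]}(\xi)$, while on $[-\pi,\pi]$ formula \eqref{L} rewrites as
\[
\hat{L}_{\phi_\alpha}(\xi)=(2\pi)^{-1/2}\Bigl(1+\sum_{j\neq0}\mathscr{M}[\hat{\phi}_\alpha]_j(\xi)\Bigr)^{-1},
\]
and on the shifted block $\xi=\eta+2\pi m$, $|\eta|\le\pi$, $m\neq0$, as $(2\pi)^{-1/2}\mathscr{M}[\hat{\phi}_\alpha]_m(\eta)\bigl(1+\sum_{j\neq0}\mathscr{M}[\hat{\phi}_\alpha]_j(\eta)\bigr)^{-1}$.

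Now I would combine (B3) and (B4): (B4) dominates each $\mathscr{M}[\hat{\phi}_\alpha]_j$ by a summable $\mathscr{M}_j$, so dominated convergence on $\mathbb{Z}\setminus\{0\}$ together with (B3) forces $\sum_{j\neq0}\mathscr{M}[\hat{\phi}_\alpha]_j(\xi)\to0$ for a.e.\ $|\xi|\le\pi$. Hence $\hat{L}_{\phi_\alpha}\to\widehat{\mathrm{sinc}}$ pointwise a.e.: the denominator tends to $1$ on $[-\pi,\pi]$ and every off-block value tends to $0$. The same majorant — $|\hat{L}_{\phi_\alpha}(\eta+2\pi m)|\le(2\pi)^{-1/2}\mathscr{M}_m$ for $m\neq0$ and $\le(2\pi)^{-1/2}$ for $m=0$, all independent of $\alpha$ and summable over the blocks — yields, by dominated convergence, $\hat{L}_{\phi_\alpha}\to\widehat{\mathrm{sinc}}$ in $L^1(\mathbb{R})$. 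Inversion then gives $\|L_{\phi_\alpha}-\mathrm{sinc}\|_{\infty}\le(2\pi)^{-1/2}\|\hat{L}_{\phi_\alpha}-\widehat{\mathrm{sinc}}\|_{L^1(\mathbb{R})}\to0$, so in particular $L_{\phi_\alpha}\to\mathrm{sinc}$ pointwise.

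The final passage to the $L^p$-norm is where the decay estimate does the real work, and it is the step I expect to be the crux. The pointwise bound \eqref{pt bnd} supplies $|L_{\phi_\alpha}(x)|\le C(1+|x|)^{-1}$ uniformly in $\alpha$, and $\mathrm{sinc}$ satisfies the same bound, so $|L_{\phi_\alpha}(x)-\mathrm{sinc}(x)|^p\le C^p(1+|x|)^{-p}$, which is integrable precisely because $p>1$. With the pointwise convergence just established and this uniform, $\alpha$-independent, $L^p$-dominating majorant, dominated convergence gives $\|L_{\phi_\alpha}-\mathrm{sinc}\|_{L^p(\mathbb{R})}\to0$, completing the reduction and hence the proof. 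The delicate point throughout is that every domination must be uniform in $\alpha$ so that it survives the limit; conditions (B2)--(B4) are exactly what guarantee this. I therefore expect the main obstacle to be organizing the two-scale estimate — summability over the Fourier blocks via (B4) for the $L^1$ step, and spatial decay via \eqref{pt bnd} for the $L^p$ step — rather than any single hard inequality.
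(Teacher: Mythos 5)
Your proposal is correct and follows essentially the same route as the paper: reduce via the uniform operator bounds and density to the single-kernel statement $\|L_{\phi_\alpha}-\mathrm{sinc}\|_{L^p(\mathbb{R})}\to 0$, prove pointwise (indeed uniform) convergence by the blockwise Fourier estimate using (B3), (B4) and dominated convergence, and finish with dominated convergence in $L^p$ using the $\alpha$-uniform decay bound \eqref{pt bnd} and $p>1$. The only cosmetic difference is that you phrase the reduction as a truncation/density argument using \eqref{I bnd} and the $\mathscr{W}$ bound, while the paper invokes uniform boundedness and checks the result on the coordinate sequences $y^j$ — these are the same argument.
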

\begin{proof}
From Corollary \ref{cor 2}, we may apply the uniform boundedness theorem.  Thus it is enough to check the result on $y^j=\{\delta_{k,j}:k\in\mathbb{Z}\}$.  We have
\begin{align*}
 &\| \mathscr{I}_{\phi_\alpha}[y^j](x)- W[y^j](x)  \|_{L^p(\mathbb{R})} = \| L_{\phi_\alpha}(x-j)-\dfrac{\sin(\pi(x-j))}{\pi(x-j)}    \|_{L^p(\mathbb{R})}\\
 =& \| L_{\phi_\alpha}(x) - \dfrac{\sin(\pi x)}{\pi x}   \|_{L^p(\mathbb{R})}
\end{align*}
Corollary \ref{cor 1} combined with the dominated convergence theorem will finish the proof, provided that we show the pointwise convergence.  We will use the Fourier transform.  We have
\begin{align*}
&|   L_{\phi_\alpha}(x) - \dfrac{\sin(\pi x)}{\pi x}       | = (2\pi)^{-1/2} \left| \int_{\mathbb{R}}\hat{L}_{\phi_\alpha}(\xi)e^{ix\xi}d\xi -  \int_{-\pi}^{\pi} e^{ix\xi}d\xi \right|\\
\leq & (2\pi)^{-1/2}\left( \int_{-\pi}^{\pi} |\hat{L}_{\phi_\alpha}(\xi)-1  |d\xi + \sum_{j\neq 0}\int_{-\pi}^{\pi}| \hat{L}_{\phi_\alpha}(\xi-2\pi j)  |d\xi\right).
\end{align*}
We will estimate the terms in parentheses separately, for the first term we have
\begin{align*}
& \int_{-\pi}^{\pi} |\hat{L}_{\phi_\alpha}(\xi)-1  |d\xi = \int_{-\pi}^{\pi} \left| \dfrac{u_\alpha(\xi)}{\hat\phi_\alpha(\xi)+u_\alpha(\xi)}  \right|d\xi\\
\leq &  \int_{-\pi}^{\pi} \sum_{j\neq 0} \mathscr{M}[\hat\phi_\alpha]_j(\xi) d\xi.
\end{align*}
The function $ u_\alpha(\xi)$ is the same one introduced in the proof of Lemma 3.  Since (B3) and (B4) hold, the dominated convergence theorem shows that this term tends to $0$.  The calculation for the second term is similar:
\begin{align*}
& \sum_{j\neq 0}\int_{-\pi}^{\pi}| \hat{L}_{\phi_\alpha}(\xi-2\pi j)  |d\xi \leq \int_{-\pi}^{\pi} \sum_{j\neq 0} \mathscr{M}[\hat\phi_\alpha]_j(\xi)   d\xi.
\end{align*}
Again, the terms tend to $0$ by the dominated convergence theorem.  Hence we have shown that
\[
\lim_{\alpha\to\infty}\left| L_{\phi_\alpha}(x)  -\dfrac{\sin(\pi x)}{\pi x}    \right| = 0,
\]
which completes the proof.
\end{proof}
These propositions lead us to our main result, stated in the theorem below.
\begin{thm}
Let $1<p<\infty$ and $\{\phi_\alpha:\alpha\in A \}$ be a spline-like family of cardinal interpolators, then the following spaces are equivalent.
\begin{enumerate}
\item[$\mathcal{A}$] = $\left\{ f\in L^p(\mathbb{R}): f(x)=\int_{-\pi}^{\pi}e^{ix\xi}d\beta(\xi),\beta\in B.V.([-\pi,\pi])   \right\}$
\item[$\mathcal{B}$] = $\left\{  f: f(x)=\mathscr{W}[\{c_k\}](x), \{c_k\}\in l^p    \right\}$
\item[$\mathcal{C}$] = $\left\{ f: f(x)= L^p\displaystyle\lim_{\alpha\to\infty} \mathscr{I}_{\phi_\alpha}[\{c_k\}](x), \{c(k)\}\in l^p    \right\}$
\end{enumerate}
\end{thm}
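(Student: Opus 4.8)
The plan is to reduce the three-way equivalence to the single identification $\mathcal{B}=\mathcal{C}$. The equivalence $\mathcal{A}=\mathcal{B}$ is already available: it was recorded in Section 2, following Theorem 3.4 of \cite{Marsden}, that the space $\mathcal{B}$ of Whittaker images of $l^p$ sequences coincides with $\mathcal{A}$. Hence, once $\mathcal{B}=\mathcal{C}$ is shown, transitivity of equivalence finishes the argument.

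To establish $\mathcal{B}\subseteq\mathcal{C}$, I would take $f\in\mathcal{B}$, write $f=\mathscr{W}[\{c_k\}]$ with $\{c_k\}\in l^p$, and apply the preceding proposition to this very sequence. Equation \eqref{Lp lim} then yields $\|\mathscr{I}_{\phi_\alpha}[\{c_k\}]-\mathscr{W}[\{c_k\}]\|_{L^p(\mathbb{R})}\to 0$ as $\alpha\to\infty$, so $f$ is exactly the $L^p$-limit of $\mathscr{I}_{\phi_\alpha}[\{c_k\}]$ and therefore lies in $\mathcal{C}$, realized by the same coefficient sequence.

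For the reverse inclusion $\mathcal{C}\subseteq\mathcal{B}$, I would take $f\in\mathcal{C}$, so that $f=L^p\lim_{\alpha\to\infty}\mathscr{I}_{\phi_\alpha}[\{c_k\}]$ for some $\{c_k\}\in l^p$. Invoking \eqref{Lp lim} again shows that this limit both exists and equals $\mathscr{W}[\{c_k\}]$; by uniqueness of limits in $L^p(\mathbb{R})$, we must have $f=\mathscr{W}[\{c_k\}]$, which places $f$ in $\mathcal{B}$.

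I do not expect a genuine obstacle here, since the analytic content has already been carried by the earlier results: the uniform bound \eqref{I bnd} controls $\mathscr{I}_{\phi_\alpha}$ on $l^p$ independently of $\alpha$, and \eqref{Lp lim} simultaneously guarantees that the defining limit of $\mathcal{C}$ exists for every $l^p$ sequence and identifies it as the Whittaker map. The only subtlety worth stating explicitly is logical rather than analytic: the definition of $\mathcal{C}$ presupposes that the $L^p$-limit exists, and this existence is something \eqref{Lp lim} supplies rather than something one may assume. Once that is noted, $\mathcal{B}=\mathcal{C}$ is immediate, and combined with the known $\mathcal{A}=\mathcal{B}$ the theorem follows.
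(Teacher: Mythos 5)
Your proposal is correct and follows the same route as the paper: cite Theorem 3.4 of \cite{Marsden} for $\mathcal{A}=\mathcal{B}$, then use the proposition establishing \eqref{Lp lim} to identify $\mathcal{B}$ with $\mathcal{C}$. The paper states this in two sentences; your version merely spells out both inclusions of $\mathcal{B}=\mathcal{C}$ and the existence-of-limit point explicitly, which is the same argument.
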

\begin{proof}
That $\mathcal{A}$ and $\mathcal{B}$ are equivalent is shown in Theorem 3.4 in \cite{Marsden}.  The previous proposition shows that $\mathcal{B}$ and $\mathcal{C}$ are equivalent.  
\end{proof}

We have the following corollary concerning the recovery of functions from the space $\mathcal{A}$.
\begin{cor}
If $f\in\mathcal{A}$ and $\{\phi_\alpha: \alpha\in A  \}$ is a spline-like family of cardinal interpolators, then 
\[
\lim_{\alpha\to\infty}\left| f(x)-\mathscr{I}_{\phi_\alpha}[\{f(k)\}](x)    \right| = 0,
\]
and the convergence is uniform.
\end{cor}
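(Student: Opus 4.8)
The plan is to reduce everything to a single, uniform-in-$x$ estimate that is already implicit in the proof of the preceding proposition, and then to combine it with the decay of the fundamental functions via H\"{o}lder's inequality. Since $f\in\mathcal{A}=\mathcal{B}$ by the theorem, I would first write $f(x)=\mathscr{W}[\{c_k\}](x)$ for some $\{c_k\}\in l^p$; evaluating the cardinal series at the integers and using $\frac{\sin\pi(j-k)}{\pi(j-k)}=\delta_{j,k}$ shows $c_k=f(k)$, so $\{f(k)\}\in l^p$ and $f(x)=\mathscr{W}[\{f(k)\}](x)$. Writing $g_\alpha(y)=L_{\phi_\alpha}(y)-\frac{\sin\pi y}{\pi y}$, the object to control becomes
\[
f(x)-\mathscr{I}_{\phi_\alpha}[\{f(k)\}](x)=-\sum_{k\in\mathbb{Z}}f(k)\,g_\alpha(x-k).
\]

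The key observation is that the estimate obtained in the proof of the previous proposition, namely
\[
|g_\alpha(y)|\leq (2\pi)^{-1/2}\left( \int_{-\pi}^{\pi}|\hat{L}_{\phi_\alpha}(\xi)-1|\,d\xi+\sum_{j\neq 0}\int_{-\pi}^{\pi}|\hat{L}_{\phi_\alpha}(\xi-2\pi j)|\,d\xi \right)=:\epsilon_\alpha,
\]
is in fact \emph{uniform} in $y$, since the right-hand side is free of $y$ (as $|e^{iy\xi}|=1$), and that same argument shows $\epsilon_\alpha\to 0$ as $\alpha\to\infty$. Simultaneously, the pointwise bound \eqref{pt bnd} together with $\left|\frac{\sin\pi y}{\pi y}\right|\leq C(1+|y|)^{-1}$ yields the decay estimate $|g_\alpha(y)|\leq C(1+|y|)^{-1}$ with $C$ independent of $\alpha$. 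Hence $|g_\alpha(y)|\leq \min\{\epsilon_\alpha,\,C(1+|y|)^{-1}\}$ for all $y$ and all $\alpha$.

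I would then apply H\"{o}lder's inequality with the conjugate exponent $q$ to obtain, uniformly in $x$,
\[
\left|\sum_{k}f(k)\,g_\alpha(x-k)\right|\leq \|f\|_{l^p}\left(\sum_{k}|g_\alpha(x-k)|^q\right)^{1/q},
\]
and it remains to show the $l^q$-factor tends to $0$ uniformly in $x$. Here I would split the sum at a threshold $|x-k|\leq M$ versus $|x-k|>M$. The near part has at most $2M+1$ terms, each at most $\epsilon_\alpha^q$, contributing $\leq (2M+1)\epsilon_\alpha^q$; the far part is bounded by $C^q\sum_{|x-k|>M}(1+|x-k|)^{-q}$, which is the tail of a convergent series (convergent because $q>1$, as $1<p<\infty$) and is small uniformly in $x$ for $M$ large. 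Given $\eta>0$, I would first fix $M$ to make the far part small, then choose $\alpha$ large to make the near part small, giving a bound uniform in $x$ and hence the claimed uniform convergence.

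The main obstacle is exactly that the crude estimate $\left|\sum_k f(k)g_\alpha(x-k)\right|\leq \epsilon_\alpha\sum_k|f(k)|$ is useless, since $\{f(k)\}$ need only belong to $l^p$ and not to $l^1$. Overcoming this requires exploiting the uniform smallness $\epsilon_\alpha$ and the summable decay of $g_\alpha$ at the same time, which is precisely what the H\"{o}lder-plus-truncation argument achieves; establishing that the truncated $l^q$ tail is small \emph{uniformly in $x$} (rather than for each fixed $x$) is the point that upgrades the $L^p$ convergence of the earlier proposition to uniform convergence.
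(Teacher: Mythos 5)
Your proof is correct, and it follows the same skeleton as the paper's argument while supplying a step the paper leaves entirely implicit. The paper's own proof is one sentence: Theorem 1 gives $f(x)=\mathscr{W}[\{f(k)\}](x)$, and then it simply cites ``the pointwise estimate from Proposition 2'' --- namely the bound $\left|L_{\phi_\alpha}(x)-\frac{\sin \pi x}{\pi x}\right|\leq \epsilon_\alpha$ with $\epsilon_\alpha\to 0$, which is uniform in $x$ --- as providing both the limit and its uniformity. As you correctly point out, for $l^p$ data with $p>1$ that estimate alone is not enough: the crude bound $\epsilon_\alpha\sum_k|f(k)|$ fails because $\{f(k)\}$ need not lie in $l^1$. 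Your H\"{o}lder-plus-truncation argument --- using the uniform smallness $\epsilon_\alpha$ on the finitely many near terms, the $\alpha$-independent decay $|g_\alpha(y)|\leq C(1+|y|)^{-1}$ (from the pointwise bound \eqref{pt bnd} together with the same bound for the sinc kernel) on the far terms, and the observation that the $l^q$ tail $\sum_{|x-k|>M}(1+|x-k|)^{-q}$ is small uniformly in $x$ because at most two integers $k$ fall in each unit annulus $m<|x-k|\leq m+1$ --- is precisely the bridge needed to convert the uniform convergence of the fundamental functions into uniform convergence of the interpolants. In short, you reproduce the paper's reduction (identify $f$ with $\mathscr{W}[\{f(k)\}]$, then compare $L_{\phi_\alpha}$ with the sinc) and then prove rigorously what the paper asserts; the extra work is not optional, since without it the corollary as stated only follows for $l^1$ coefficient sequences.
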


\begin{proof}
Theorem 1 implies that $f(x)=\mathscr{W}[\{f(k)\}](x)$, hence the pointwise estimate from Proposition 2 provides the limit, as well as uniformity.  
\end{proof}


\section{Examples}
In this section we provide examples of spline-like families of cardinal interpolators.  Some of these results appear in the literature. Specifically, the spline analog of the $L^p$ result that is obtained here was worked out by Marsden, Richards, and Riemenschneider in \cite{Marsden} and the Gaussian analog was worked out by Riemenschneider and Sivakumar in \cite{RS}.  The work done in \cite{z paper} for the space $L^2$ suggested that an appropriate generalization to $L^p$ should exist. 

The interpolating conditions (A1)-(A4), are generally straightforward exercises which we leave to the reader.  Most of the calculations involved are routine, so we show only a typical calculation.  Checking (B2) is the most tedious calculation and to ease out work we split the double sum into five pieces defined below. 
\begin{enumerate}
\item[$\Lambda_0$] = $\{ (0,n)\in \mathbb{Z}^2 : n\neq 0    \}$
\item[$\Lambda_1$] = $\{ (m,n)\in \mathbb{Z}^2 : m>0 , n>0, n\neq m    \}$
\item[$\Lambda_2$] = $\{ (m,n)\in \mathbb{Z}^2 : m<0 , n>0  \}$
\item[$\Lambda_3$] = $\{ (m,n)\in \mathbb{Z}^2 : m<0 , n<0, n\neq m    \}$
\item[$\Lambda_4$] = $\{ (m,n)\in \mathbb{Z}^2 : m>0 , n<0   \}$
\end{enumerate}
These correspond more or less to the quadrants so that we can break up absolute values easily.  Finally, we will omit constants from the various transforms to avoid clutter and they cancel out in the fundamental function in any case.

\subsection{Odd degree Splines}
We will examine the spline interpolator whose definition may be found in \cite{Marsden}.  Owing to the positivity requirement in (A2), we will only consider odd degree splines,i.e. those which reduce to an odd degree polynomial off of the integers, whose Fourier transforms are given by $\hat\phi_{k}(\xi)= |\xi|^{-2k}$ for $k\in\mathbb{N}$.  The reader is encouraged to examine \cite{Marsden} for more details.
We begin by verifying (B2) on $\Lambda_0$:
\begin{align*}
&\sum_{(0,n)\in\Lambda_0}\int_{-\pi}^{\pi}|\mathscr{M}[\hat{\phi}_k]_{0}(\xi) \mathscr{M}[\hat{\phi}'_k]_{n}(\xi)|d\xi \\
\leq &\sum_{n\neq 0} \int_{-\pi}^{\pi}|\xi|^{2k}(2k)|\xi+2\pi n|^{-2k-1}d\xi\\
\leq & 2 \sum_{n\neq 0}(2|n|-1)^{-2k} \leq  2 \sum_{n\neq 0} (2|n|-1)^{-2}.
\end{align*}
This sum is independent of $k\in\mathbb{N}$, so we move on to $\Lambda_1$.
\begin{align*}
&\sum_{(m,n)\in\Lambda_1}\int_{-\pi}^{\pi}|\mathscr{M}[\hat{\phi}_k]_{m}(\xi) \mathscr{M}[\hat{\phi}'_k]_{n}(\xi)|d\xi \\
\leq & 2\sum_{m>0}\sum_{n>0}(2|m|-1)^{-2k}(2|n|-1)^{-2k}\\
\leq & 2\left( \sum_{m>0}(2|m|-1)^{-2}  \right)^2
\end{align*}
Analogous calculations show (B2) is satisfied for the remaining sums.
To check (B3), we suppose $j\neq 0$ and $|\xi| <\pi$, then we have
$\pi < |2\pi j + \xi|$ so that
\[
\mathscr{M}[\hat{\phi}_k]_{j}(\xi)\leq (2j +\xi/\pi)^{-2k}
\]
Since the term on the right tends to 0, (B3) is satisfied.
Finally, we see that (B4) is satisfied since if $j\neq 0$, we have
\[
\mathscr{M}[\hat{\phi}_k]_{j}(\xi) \leq (2|j|-1)^{-2}.
\]

Before we move to our next example a word on necessity is in order.  Our conditions (B1)-(B4) cannot be necessary since even order splines do not meet the requirements, nevertheless, convergence is possible as shown in \cite{Marsden}.  The condition that \emph{is} satisfied is that the sum in the denominator of $\hat L$ is bounded away from $0$, which is the purpose of (A2).

\subsection{Gaussian kernels}

We now consider $\{ \exp(-x^2/(4\alpha))   :\alpha \geq 1\}$, which is the focus of \cite{RS}, although we have changed the parameter to suit our purposes.  We have $\hat\phi_{\alpha}(\xi) = \exp(-\alpha \xi^2)  $, so that (B1) is clear.  As for (B2), we again only show $\Lambda_0$ and $\Lambda_1$ since the other cases are similar.
\begin{align*}
&\sum_{(0,n)\in\Lambda_0}\int_{-\pi}^{\pi}|\mathscr{M}[\hat\phi_\alpha]_0(\xi)\mathscr{M}[\hat\phi'_\alpha]_n(\xi)|d\xi\\
\leq & \sum_{n \neq 0} \int_{-\pi}^{\pi} 2\alpha \exp(\alpha\pi^2)  |\xi+2\pi n|\exp(-\alpha(\xi+2\pi n)^2)d\xi \\
\leq & 2 \sum_{n\neq 0} \exp(-4\alpha\pi(|n|^2-|n|))\leq 2 \sum_{n\neq 0} \exp(-4\pi(|n|^2-|n|))
\end{align*}
\begin{align*}
&\sum_{(m,n)\in\Lambda_1}\int_{-\pi}^{\pi}|\mathscr{M}[\hat\phi_\alpha]_m(\xi)\mathscr{M}[\hat\phi'_\alpha]_n(\xi)|d\xi\\
\leq & 2 \sum_{m,n>0}\exp(-4\alpha\pi(m^2-m))\exp(-4\alpha\pi(n^2-n)) \\
\leq & 2\left( \sum_{m>0}\exp(-4\pi(m^2-m))  \right)^2
\end{align*}
Conditions (B3) and (B4) follow from similar reasoning as used for odd-degree splines.

\subsection{Poisson kernels}
We now consider $\{(x^2+\alpha^2)^{-1}:\alpha\geq 1  \}$, the family of Poisson kernels, whose Fourier transforms are given by $\hat\phi_{\alpha}(\xi)= \exp(-\alpha|\xi|)$. 
Since (B1) is clear, we check (B2) for $\Lambda_0$ and $\Lambda_1$.  
\begin{align*}
&\sum_{(0,n)\in\Lambda_0}\int_{-\pi}^{\pi}|\mathscr{M}[\hat\phi_\alpha]_0(\xi)\mathscr{M}[\hat\phi'_\alpha]_n(\xi)|d\xi\\
\leq & \sum_{n \neq 0} \int_{-\pi}^{\pi} \alpha\exp(\alpha\pi)\exp(-\alpha|\xi+2\pi n|)d\xi \\
\leq & 2\sum_{n\neq 0} \exp(-2\pi\alpha(|n|-1)) \leq  4+4\sum_{n> 0} \exp(-2\pi n)
\end{align*}
\begin{align*}
&\sum_{(m,n)\in\Lambda_1}\int_{-\pi}^{\pi}|\mathscr{M}[\hat\phi_\alpha]_m(\xi)\mathscr{M}[\hat\phi'_\alpha]_n(\xi)|d\xi\\
\leq & \sum_{m,n>0}\int_{-\pi}^{\pi} \alpha\exp(2\alpha\pi)\exp{-\alpha(\xi+2\pi m)}\exp(-\alpha(\xi_2\pi n))      d\xi \\
\leq & 2 \sum_{m,n>0} \exp(-2\alpha\pi(m-1))  \exp(-2\alpha\pi(n-1))\\
\leq & 2\left( 1+ \sum_{m>0} \exp(-2\pi m)     \right)^2
\end{align*}
To check (B3) we note that for $|\xi|<\pi$ and $j\neq 0$, $\pi <|2\pi j+\xi|$ so we have
\[
\mathscr{M}[\hat{\phi}_\alpha]_{j}(\xi)\leq \exp(\alpha(\pi-|2\pi j + \xi|)).
\]
Since the exponent is negative, the right hand side tends to 0 as required.
For (B4) we have
\[
\mathscr{M}[\hat{\phi}_\alpha]_{j}(\xi)\leq \exp(-2\pi (|j|-1)).
\]

\subsection{Multiquadrics I} 
We consider the family of multiquadrics $\{ (x^2+c^2)^{k-1/2} : k\in\mathbb{N}   \}$ for a fixed $c>0$.  These kernels correspond to `smoothed out' odd degree splines and the convergence result for these interpolation operators appears to be new.  We need the Fourier transforms which may be found in \cite{Jones}; neglecting constants we have $\hat\phi_k(\xi)= |\xi|^{-k}K_{k}(c|\xi|) $, where 
\[
K_a (u) = \int_{0}^{\infty} \exp(-u\cosh(t))\cosh(a t)dt, \quad u>0
\]
is the Macdonald function, also known as the modified Bessel function of the second kind.  We will make use of the following estimates, which we state as a lemma.
\begin{lem}
For $|a|\geq 1/2$ and $u>0$, we have the inequalities
\begin{equation}\label{K bnd}
(\pi/2)^{1/2}u^{-1/2}\exp(-u) \leq K_a(u)\leq (2\pi)^{1/2}u^{-1/2}\exp(-u)\exp(a^2/(2u)).
\end{equation}
\end{lem}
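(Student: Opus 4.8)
The plan is to treat the two inequalities separately, working directly from the integral representation $K_a(u)=\int_0^\infty \exp(-u\cosh t)\cosh(at)\,dt$ in each case.

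For the lower bound, the key observation is that $\cosh$ is increasing on $[0,\infty)$, so the hypothesis $|a|\geq 1/2$ forces $\cosh(at)=\cosh(|a|t)\geq \cosh(t/2)$ for every $t\geq 0$. Since the kernel $\exp(-u\cosh t)$ is positive, this monotonicity gives $K_a(u)\geq K_{1/2}(u)$, and it then remains to evaluate $K_{1/2}(u)$ in closed form. I would substitute $w=\sinh(t/2)$, using $\cosh t=1+2\sinh^2(t/2)$ together with $\cosh(t/2)\,dt=2\,dw$, which converts the integral into a Gaussian and yields $K_{1/2}(u)=(\pi/2)^{1/2}u^{-1/2}\exp(-u)$. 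This is precisely the claimed lower bound.

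For the upper bound, I would estimate the two factors of the integrand by elementary functions. From the power series one has $\cosh t\geq 1+t^2/2$ for all $t$, so that $\exp(-u\cosh t)\leq \exp(-u)\exp(-ut^2/2)$, while for $t\geq 0$ the elementary inequality $\cosh(at)\leq \exp(|a|t)$ holds. Combining these gives $K_a(u)\leq \exp(-u)\int_0^\infty \exp(-ut^2/2+|a|t)\,dt$. Completing the square in the exponent produces exactly the factor $\exp(a^2/(2u))$ together with a shifted Gaussian, and since the integrand is nonnegative I may enlarge the domain of integration from $[0,\infty)$ to all of $\mathbb{R}$, leaving a full Gaussian integral equal to $(2\pi/u)^{1/2}$. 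This delivers the upper bound $(2\pi)^{1/2}u^{-1/2}\exp(-u)\exp(a^2/(2u))$.

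I expect the only subtle point to be the lower bound: one must recognize that the restriction $|a|\geq 1/2$ is exactly what permits comparison with the explicitly integrable case $a=1/2$, and that $K_{1/2}$ admits a clean closed form. The upper bound, by contrast, does not use $|a|\geq 1/2$ at all and is a routine completion-of-the-square estimate; the only care required there is to confirm that extending the interval of integration is harmless because the integrand is nonnegative.
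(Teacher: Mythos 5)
Your proof is correct, but it takes a genuinely different route from the paper: the paper does not prove these inequalities at all, it simply cites Corollary 5.12 and Lemma 5.13 of Wendland's \emph{Scattered data approximation} and remarks that the bounds appear in Abramowitz--Stegun as asymptotic relations. Your argument is a self-contained derivation from the integral representation, and both halves check out. For the lower bound, evenness and monotonicity of $\cosh$ give $\cosh(at)=\cosh(|a|t)\geq\cosh(t/2)$ for $|a|\geq 1/2$, so $K_a(u)\geq K_{1/2}(u)$, and the substitution $w=\sinh(t/2)$ with $\cosh t=1+2\sinh^2(t/2)$ correctly produces $K_{1/2}(u)=2e^{-u}\int_0^\infty e^{-2uw^2}\,dw=(\pi/2)^{1/2}u^{-1/2}e^{-u}$. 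For the upper bound, $\cosh t\geq 1+t^2/2$ and $\cosh(at)\leq e^{|a|t}$ give $K_a(u)\leq e^{-u}\int_0^\infty e^{-ut^2/2+|a|t}\,dt$, and completing the square plus enlarging the domain to all of $\mathbb{R}$ yields the factor $e^{a^2/(2u)}\sqrt{2\pi/u}$, exactly as claimed. What each approach buys: the paper's citation is economical and leans on standard references, while your derivation makes the lemma self-contained and exposes structure the citation hides --- in particular your observation that $|a|\geq 1/2$ is needed only for the lower bound (to compare with the explicitly integrable case $a=1/2$), whereas the upper bound is valid for every real $a$, is accurate and worth knowing when one wants to relax the hypotheses.
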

\begin{proof}
This is just a combination of Corollary 5.12 and Lemma 5.13 in \cite{Wendland}.  These bounds also appeared in \cite{AS} as asymptotic relations. 
\end{proof}
The interpolatory conditions follow easily from the definition of the Macdonald function, also of interest is that $\hat\phi_k(\xi)$ decreases on $(0,\infty)$.
It is worthwhile to start with (B4), as we will see that the cases $k=1$ and $k>1$ will require different arguments.  Let us start with $k=1$ and use \eqref{K bnd}.
\begin{align*}
 \mathscr{M}[\hat{\phi}_1]_{j}(\xi) &\leq 2\exp(1/(2\pi)) \exp(c(\pi-|\xi+2\pi j|))\left( \pi/|\xi+2\pi j|  \right)^{3/2}\\
&\leq  3(2|j|-1)^{-3/2}
\end{align*}
For $k>1$ the estimate is much simpler, we only need to use that the Macdonald function is decreasing.
\begin{align*}
& \mathscr{M}[\hat{\phi}_k]_{j}(\xi) \leq (2|j|-1)^{-k} 
\end{align*} 
Thus we see that (B4) is satisfied with $\mathscr{M}_j=3(2|j|-1)^{-3/2}$.
The above estimates imply that (B3) holds as well.  We now must check (B2).  Again we split up our estimate into $k=1$ and $k>1$.  For $k=1$, our estimate of the $\Lambda_0$ sum is similar to the calculation for (B2), indeed we have 
\begin{align*}
& \sum_{(0,n)\in\Lambda_0}\int_{-\pi}^{\pi} |\mathscr{M}[\hat{\phi}_1]_{0}(\xi)\mathscr{M}[\hat{\phi}'_1]_{n}(\xi)|d\xi \\
\leq & \sum_{n\neq 0} 4 \exp(1/(2\pi)) \exp(-2c\pi(|n|-1))(2|n|-1)^{-3/2}\\
 \leq & 6\sum_{n\neq 0} (2|n|-1)^{-3/2}
\end{align*}
As for $k>1$, we use the corresponding estimate and see that
\begin{align*}
& \sum_{(0,n)\in\Lambda_0}\int_{-\pi}^{\pi}|\mathscr{M}[\hat{\phi}_k]_{0}(\xi)\mathscr{M}[\hat{\phi}'_k]_{n}(\xi)|d\xi \\
\leq & 2\sum_{n\neq 0} (2|n|-1)^{-k} \leq 2\sum_{n\neq 0} (2|n|-1)^{-2}
\end{align*}
So we can see that the sum over $\Lambda_0$ is bounded independent of $k$.  For the sum over $\Lambda_1$ and $k=1$, we have
\begin{align*}
&\sum_{(m,n)\in\Lambda_1}\int_{-\pi}^{\pi}|\mathscr{M}[\hat\phi_1]_m(\xi)\mathscr{M}[\hat\phi'_1]_n(\xi)|d\xi\\
\leq & 18 \left( \sum_{m>0}(2m-1)^{-3/2} \right)^2,
\end{align*}
and for $k>1$ we have
\begin{align*}
&\sum_{(m,n)\in\Lambda_1}\int_{-\pi}^{\pi}|\mathscr{M}[\hat\phi_k]_m(\xi)\mathscr{M}[\hat\phi'_k]_n(\xi)|d\xi \\
\leq & 2\left( \sum_{m>0}(2m-1)^{-2k}   \right)^2 \leq  2\left( \sum_{m>0}(2m-1)^{-2}   \right)^2.
\end{align*}
Hence the sum of $\Lambda_1$ is also bounded independent of $k$.  Analogous bounds hold for the other sums.

If $\{a_k:k\in\mathbb{N}\}\subseteq [1/2,\infty)\setminus\mathbb{N}$ satisfies dist$(\{a_k\}, \mathbb{N})>0$, then a slight modification of the argument given above will show that for $c>0$ fixed, $\{(x^2+c^2)^{a_k}:k\in\mathbb{N}\}$ is a spline-like family of cardinal interpolators as well.

\subsection{Multiquadrics II}
We will now consider the family of multiquadrics $\{(x^2+c^2)^{a}:c\geq 1\}$, where $a\in\mathbb{R}$ is fixed.  The case that $a=1/2$ is the subject of \cite{Baxter}.  This time the Fourier transforms are given by constant multiples of $\hat{\phi}_c(\xi)= |\xi|^{-(a+1/2)}K_{a+1/2}(c|\xi|)$ where $a\notin \tilde{N}=\mathbb{N}\cup\{0\}\cup\{-k-1/2:k\in\mathbb{N}\}$.  In order to use \eqref{K bnd}, we continue with $a\in(\mathbb{R}\setminus\tilde{N})\cap \{a\in\mathbb{R}:|a+1/2|\geq 1/2\}$.  As usual, we check (B2), (B3), and (B4). We check (B4) first.
\begin{align*}
 \mathscr{M}[\hat{\phi}_c]_{j}(\xi) &\leq 2\exp(1/(2\pi)) \exp(c(\pi-|\xi+2\pi j|))\left( \pi/|\xi+2\pi j|  \right)^{a+1/2}\\
&\leq  3(2|j|+1)^{|a|+1/2}\exp(-2c\pi(|j|-1))\\
&\leq 3(2|j|+1)^{|a|+1/2}\exp(-2\pi(|j|-1))
\end{align*} 
For $|\xi|<\pi$ and $j\neq 0$, we can see that $\pi - |\xi+2\pi j|<0$, hence the exponent in the above calculation is negative and all of the terms tend to 0 as $c\to\infty$, which means (B3) is satisfied.  We must now check (B2).  For the sum over $\Lambda_0$, we have
\begin{align*}
& \sum_{(0,n)\in\Lambda_0}\int_{-\pi}^{\pi}|\mathscr{M}[\hat{\phi}_c]_{0}(\xi) \mathscr{M}[\hat{\phi}'_c]_{n}(\xi) |d\xi \\
\leq & 4\exp(a^2/(2\pi))\sum_{n\neq 0} (2|n|+1)^{a+1/2}\exp(-2c\pi(|n|-1))\\
\leq & 4\exp(a^2/(2\pi))\sum_{n\neq 0} (2|n|+1)^{a+1/2}\exp(-2\pi(|n|-1)),
\end{align*}
which is bounded uniformly in $c$.  For $\Lambda_1$, we have
\begin{align*}
& \sum_{(m,n)\in\Lambda_1}\int_{-\pi}^{\pi}| \mathscr{M}[\hat{\phi}_c]_{m}(\xi)\mathscr{M}[\hat{\phi}'_c]_{n}(\xi)  |d\xi \\
\leq & 8\exp(a^2/\pi)\left( \sum_{m>0} (2j+1)^{a+1/2}\exp(-2c\pi(m-1))  \right)^2 \\
\leq & 8\exp(a^2/\pi)  \left( \sum_{m>0} (2j+1)^{a+1/2}\exp(-2\pi(m-1))    \right)^{2}
\end{align*}
Since analogous estimates hold for the other sums, this collection of multiquadrics is also a spline-like family of interpolators.


\end{document}